\documentclass[11pt]{amsart}

\usepackage{amsmath, amssymb, amsthm, mathtools}
\usepackage{mathrsfs}
\usepackage{enumerate}
\usepackage{hyperref}
\usepackage{geometry}
\usepackage{graphicx}
\theoremstyle{plain}
\newtheorem{theorem}{Theorem}[section]
\newtheorem{corollary}[theorem]{Corollary}

\newtheorem{lemma}[theorem]{Lemma}

\theoremstyle{definition}

\theoremstyle{remark}
\newtheorem{remark}[theorem]{Remark}

\title[H\"older regularity for an elliptic PDE]{Improved H\"older regularity for elliptic equations of non-divergence type in the plane}
\author{Zhiqiang Hou}
\address{Zhiqiang Hou,  Department of Mathematics,
Shantou University,
Shantou 515000,
China.}
\email{25zqhou@stu.edu.cn}
\author{Jian-Feng Zhu}
\address{Department of Mathematics, Shantou University, Shantou, Guangdong, China}
\email{flandy@stu.edu.cn}
\date{\today}

\begin{document}

\begin{abstract}
In this paper, we obtain an improved H\"older regularity for quasiregular gradient mappings which was studied in \cite{Kovalev-1}.
\end{abstract}

\maketitle

\section{Introduction}\label{sec-1}
Throughout this paper we study the strong solutions to the elliptic equation $L\varphi=0$. Let $\mathbb{C}$ be the complex plane, and let $\mathbb{D}(z_0, r)$ be a disk in $\mathbb{C}$, centered at $z_0$ and has the radius $r$, i.e.
$\mathbb{D}(z_0, r)=\{z\in \mathbb{C}: |z-z_0|< r\}$. For simplicity, the unit disk will be denoted by $\mathbb{D}$. Moreover, we use $\mathrm{Tr}(A)$ to denote the {\it trace} of $A$, and $A^T$ denotes the {\it transpose} of $A$.

\subsection{Linear elliptic differential equations}
Let $\Omega$ be a bounded domain in $\mathbb{R}^2$, which will be identified with the complex plane $\mathbb{C}$. Suppose $\varphi(x,y)\in C^2(\Omega)$ and $g(x,y)\in L^{\infty}(\Omega)$. Consider the following linear elliptic differential equations:
\begin{equation}\label{elliptic-1}L \varphi=a_{11}\varphi_{xx}+2a_{12}\varphi_{xy}+a_{22}\varphi_{yy}=g,\end{equation}
where the coefficients matrix $A=\big(a_{ij}(x,y)\big)_{2\times2}$ is a symmetric matrix, with measurable real coefficients.
For simplicity, let $B=D^2\varphi$ be the Hessian matrix of $\varphi$. Then (\ref{elliptic-1}) can be written as follows:
\begin{equation}\label{another-for-elp}L\varphi=\mathrm{Tr}(AB)=g.\end{equation}

Following the definition in \cite[Page 31]{DG}, the operator $L$ is said to be {\it elliptic} in $\Omega$, if
the coefficient matrix $A$ is positive; that is, if $\lambda$ and $\Lambda$ denote the minimum and maximum eigenvalues of $A$, then
\begin{equation}\label{uniform}0<\lambda |\xi|^2\leq \langle A\xi, \xi\rangle=\sum_{i,j=1}^{2}a_{ij}(\zeta)\xi_i\xi_j\leq\Lambda|\xi|^2\end{equation}
for all $\xi=(\xi_1, \xi_2)\in\mathbb{R}^2\setminus\{\mathbf{0}\}$. If $\lambda\geq\lambda_0>0$ for some constants $\lambda_0$, then $L$ is {\it strictly elliptic}.
If $\Lambda/\lambda$ is bounded in $\Omega$, then we call $L$ {\it uniformly elliptic} in $\Omega$.

The Dirichlet problem of the form: $L\varphi=g$, with $\varphi=0$ on the boundary $\partial\Omega$, arises in many applications from areas such as probability and stochastic processes.
It is well-known that, in the planar case, $a_{ij}\in L^{\infty}(\Omega)$ together with $L$ is uniformly elliptic in $\Omega$ can obtain well-posedness of the Dirichlet problem.
However, in higher dimensional case, the situation will be different and one should add the {\it Cod\'es condition} on the coefficients $a_{ij}$ to obtain the well-posedness of solutions. See for instance, the discussions in \cite{MPS}.

For the case $g=0$, the solutions will have useful properties in the theory of quasiregular mappings.

\subsection{\texorpdfstring{$K$-}\mbox{quasiregular mappings}}
Let $z=x+iy\in \Omega\subseteq\mathbb{C}$ and $f(z)=u(x,y)+iv(x,y)$ be a complex-valued function. The {\it formal derivatives} of $f$ are defined as follows:
$$f_{z}=\frac{1}{2}(f_x-if_y)\ \ \ \mbox{and}\ \ \ \ f_{\bar{z}}=\frac{1}{2}(f_x+if_y).$$

Following the definition in \cite{Astala}, given $K\geq1$, an orientation-preserving function $f\in  W^{1, 2}_{\mathrm{loc}}(\Omega; \mathbb{C})$ is said to be a $K$-quasiregular mapping (briefly $K$-q.r.) if and only if
\begin{equation}\label{k-qr} f_{\bar{z}}(z)=\mu(z) f_z(z),\ \ \mbox{for almost every}\ \  z\in\Omega, \end{equation}
where $\mu$, called the {\it Beltrami coefficient} of $f$, is a bounded measurable function satisfying
$$\|\mu\|_{\infty}\leq k=\frac{K-1}{K+1}<1.$$
If further, $f$ is a homeomorphic mapping, then $f$ is $K$-quasiconformal in $\Omega$.

The differential equation (\ref{k-qr}) is called the {\it Beltrami equation}. It is this equation that provides the connections from the geometric
theory of quasiconformal mappings to complex analysis and to elliptic PDEs.

Often, it is convenient to formulate (\ref{k-qr}) as the following distortion inequality
$$\frac{|f_z|+|f_{\bar{z}}|}{|f_z|-|f_{\bar{z}}|}=\frac{|D_f|^2}{J_f}\leq K<\infty,$$
where
$|D_f|=\max_{0\leq\alpha\leq\pi}|\partial_{\alpha}f|=|f_z|+|f_{\bar{z}}|$
is the maximal directional derivative of $f$, and $J_f=|f_z|^2-|f_{\bar{z}}|^2$ is the Jacobian of $f$.

For more details on the definition of higher-dimensional quasiconformal mappings and their applications, we refer to \cite{Vaisala, Matti} and \cite{Liu-Zhu, Liu-Petar-Zhu}.

\subsection{H\"older space} According to \cite[page 115]{Astala}, the H\"older space $\mathbf{C}^{\alpha}(\mathbb{C})$, $0<\alpha\leq1$, consists of continuous functions $f:\mathbb{C}\to\mathbb{C}$ that satisfy the H\"older condition:
$$\|f\|_{\mathbf{C}^{\alpha}(\mathbb{C})}=\sup\limits_{z\neq w}\frac{|f(z)-f(w)|}{|z-w|^{\alpha}}<\infty.$$

Let $f$ be a $K$-q.r. mapping of $\Omega$. By using the isoperimetric inequality, Green's formula, and Mori's theorem, one can prove that $f\in\mathbf{C}^{1/K}(\Omega)$ (see for example \cite[pages 80-82]{Astala}). Moreover, the H\"older exponent $1/K$ is best possible in the class of all $K$-quasiregular mappings, as the extremal function $|z|^{1/K-1}z$ shows.

\subsection{\texorpdfstring{$K$-}\mbox{quasiregular gradient mapping}}
Let $\varphi$ be a strong solution of the linear elliptic equation (\ref{elliptic-1}), with the operator $L$ uniformly elliptic in $\mathbb{D}$. Consider the complex-valued function
$f=\varphi_z$, i.e.
$$f=u+iv=\frac{\varphi_x-i\varphi_y}{2}.$$
In Section \ref{sec-21}, we will show that $f$ satisfies the following condition:
\begin{equation}\label{eq-11-28-1}|\nabla f|^2=2(|f_z|^2+|f_{\bar{z}}|^2)\leq 2KJ_f+K'\end{equation}
where
$$K=1+\frac{\Lambda}{\lambda}\ \ \ \mbox{and}\ \ \ K'=\frac{\|g\|_{\infty}^2}{2\lambda^2}.$$
Here $\lambda$ and $\Lambda$ are eigenvalues of the coefficient matrix $A$, and are given in (\ref{uniform}).

A function $f$ is a $(K, K')$-quasiregular mapping, if it satisfies (\ref{eq-11-28-1}). See for instance, \cite[Chapter 12]{DG}.
If in particular, $g=0$ and consequently $K'=0$, then such functions $f$ are $K$-quasiregular in $\Omega$.
It should be noted that in the homogeneous case $g=0$, after an orthogonal change of variables in the coefficient matrix $A$, the standard quasiregularity constant can be improved as (see Appendix \ref{apx1})
$$K=\frac{\Lambda}{\lambda}$$
which coincides with the assumption of \cite[(1.2)]{Kovalev-1}.

Notice that, $f_{\bar{z}}$ is real-valued, because
$f_{\bar{z}}=\varphi_{z\bar{z}}$
a.e. in $\Omega$.
We call $f\in  W^{1, 2}_{\mathrm{loc}}(\Omega; \mathbb{C})$ a $K$-{\it quasiregular gradient mapping}, if (\ref{k-qr}) holds and $f_{\bar{z}}$ is real.

\subsection{Improved H\"older exponent for \texorpdfstring{$K$-}\mbox{quasiregular gradient mapping}}
For a.e. $r\in(0, 1)$ the function $\theta\mapsto f(re^{i\theta})$ is absolutely continuous and its derivative is square integrable.
This allows us to expand it into the uniformly converging Fourier series
$$f(re^{i\theta})=\sum_{n=-\infty}^{\infty}c_{n}(r)e^{in\theta},$$
where
$$c_n(r)=\frac{1}{2\pi}\int_0^{2\pi}f(re^{i\theta})e^{-in\theta}\mathrm{d}\theta,\ \ \ n\in\mathbb{Z}.$$
It is convenient to use the polar coordinates to obtain
$$ f_z=\frac{e^{-i\theta}}{2}\left(f_r-\frac{i}{r}f_{\theta}\right)\ \ \ \mbox{and}\ \ \ f_{\bar{z}}=\frac{e^{i\theta}}{2}\left(f_r+\frac{i}{r}f_{\theta}\right).$$
Following the notation in \cite{Kovalev-1}, there are three real-valued functions $p$, $q$, $s$, so that
\begin{equation}\label{10-29-1}f_{\bar{z}}+e^{2i\theta}f_z=s-iq=\sum_{n=-\infty}^{\infty}c'_n(r)e^{i(n+1)\theta}\end{equation}
and
\begin{equation}\label{10-29-2}f_{\bar{z}}-e^{2i\theta}f_z=p+iq=-\sum_{n=-\infty}^{\infty}\frac{n}{r}c_n(r)e^{i(n+1)\theta}.\end{equation}
By using the arithmetic-geometric mean inequality $2ab\leq a^2+b^2$, Baernstein and Kovalev first improved the estimate of $J_f$ as follows (\cite[Lemma 2.1]{Kovalev-1})
$$J_f\geq \frac{1-k}{1+k}p^2+(1-k)q^2.$$
Later, applying Morrey's theorem and by comparing the Fourier series of $\int_0^{2\pi}p^2\mathrm{d}\theta$ and $\int_0^{2\pi}q^2\mathrm{d}\theta$ with $\int_0^{2\pi}J_f\mathrm{d}\theta$,
they finally obtain the improved H\"older exponent of $f$ as follows
\begin{equation}\label{kov-05}\alpha_{1}=\frac{(1-k)(\sqrt{k^2+16k+16}-k-2)}{2(1+k)}>\frac{1-k}{1+k}=\frac{1}{K}.\end{equation}

\subsection{Motivation} Although the H\"older exponent $\alpha_1$ in (\ref{kov-05}) is not optimal, `` it does provide the first H\"older regularity for $K$-quasiregular gradient mappings beyond the long-standing threshold $1/K$ ", as was said in \cite{Kovalev-1}. In this paper, we are interested in finding the optimal  H\"older exponent $\alpha$. We first estimate $J_f$ as follows:
\begin{align*}
  J_f & =\frac{1-|\mu|^2}{1+|\mu|^2}\left(q^2+\frac{p^2+s^2}{2}\right) \\
   & \geq \frac{(1-k^2-t)t}{(1-k^2)(1-t)}p^2+tq^2:=t_1p^2+tq^2,
\end{align*}
where $1-k<t<1-k^2$. Then, by Morrey's theorem, we show that the corresponding H\"older exponent is as follows:
\begin{equation}\label{eq-26-1-3-1}\alpha(t)=\frac{1}{2}\left(\sqrt{(t_1+t)^2+12t_1t}-(t_1+t)\right).\end{equation}
We prove in Appendix \ref{apx2} that $\alpha'(1-k)>0$, $\alpha'(1-k^2)<0$, and $\alpha''(t)<0$. Therefore, $\alpha(t)$ has a unique critical point $t_{\star}\in(1-k, 1-k^2)$  and the maximal value
$$\alpha(t_\star)=\max_{1-k<t<1-k^2}\alpha(t)$$
exists. Moreover, we prove that $t_\star$ is a solution of the following equation
\begin{align}\label{3-25-ev}\nonumber
  N_k(t)=&(16-16k^2+k^4)t^4-(64-80k^2+18k^4)t^3 \\
  & +(96-160k^2+69k^4-5k^6)t^2-(64-144k^2+96k^4-16k^6)t\\\nonumber
  &+16-48k^2+48k^4-16k^6=0.
\end{align}

In summary, our main results are as follows.

\begin{theorem}\label{thm1-2021-Oct-27}
Suppose that $\varphi\in W^{2, 2}_{\mathrm{loc}}(\Omega)$ is a strong solution of the uniformly elliptic equation $L \varphi=0$ in a domain $\Omega$.
Then
$\varphi\in\mathbf{C}^{1, \alpha}_{\mathrm{loc}}(\Omega)$, where
$$\alpha =\alpha(t_\star)$$
is given by $(\ref{eq-26-1-3-1})$, and $t_\star\in(1-k, 1-k^2)$ is a solution of the equation $(\ref{3-25-ev})$.
\end{theorem}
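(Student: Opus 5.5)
We reduce the regularity of $\varphi$ to Hölder continuity of the gradient mapping $f=\varphi_z$. Since $\varphi\in W^{2,2}_{\mathrm{loc}}$ and $L\varphi=0$, Section~\ref{sec-21} (with the improvement in Appendix~\ref{apx1}) shows that $f\in W^{1,2}_{\mathrm{loc}}$ is a $K$-quasiregular gradient mapping: $f_{\bar z}=\mu f_z$ with $\|\mu\|_\infty\le k$ and $f_{\bar z}=\varphi_{z\bar z}$ real. It therefore suffices to prove $f\in\mathbf C^{\alpha}_{\mathrm{loc}}$ with $\alpha=\alpha(t_\star)$. Translating and rescaling, we may take $f$ defined near $\overline{\mathbb D}$ and study the Dirichlet energy $\Phi(r)=\int_{\mathbb D(0,r)}J_f\,dA$. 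By Morrey's lemma, the Hölder estimate follows from a decay $\Phi(r)\le C r^{2\alpha}$, and this in turn follows from the differential inequality $\Phi(r)\le \frac{r}{2\alpha}\Phi'(r)$ for a.e.\ $r$. Here $\Phi'(r)=r\int_0^{2\pi}J_f(re^{i\theta})\,d\theta$.

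\textbf{Pointwise step.} Write $f_{\bar z}\pm e^{2i\theta}f_z$ in terms of $p,q,s$ as in (\ref{10-29-1})--(\ref{10-29-2}). Realness of $f_{\bar z}$ forces the imaginary parts of the two combinations to be $\mp q$ and $q$, so that
\[
f_{\bar z}=\tfrac{s+p}{2},\qquad e^{2i\theta}f_z=\tfrac{s-p}{2}-iq .
\]
Then $J_f=|f_z|^2-f_{\bar z}^2$, and combining this with $|f_{\bar z}|=|\mu||f_z|$ gives the identity $J_f=\frac{1-|\mu|^2}{1+|\mu|^2}\bigl(q^2+\frac{p^2+s^2}{2}\bigr)$. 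The variable $s$ is constrained through $\mu$, so we eliminate it. For fixed $p,q$, the quantity $|\mu|^2=(s+p)^2/\bigl((s-p)^2+4q^2\bigr)$ is at most $k^2$, and we minimise the right-hand side over admissible $s$. Parametrising the weight, this minimisation yields $J_f\ge t_1p^2+tq^2$ for every $t\in(1-k,1-k^2)$, with $t_1=\frac{(1-k^2-t)t}{(1-k^2)(1-t)}$. This is an elementary but fiddly one-variable optimisation (a Lagrange/AM-GM argument in $s$), and it should be checked carefully.

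\textbf{Fourier step.} Integrate in $\theta$ and use Parseval. By (\ref{10-29-2}), $\int p^2+q^2=2\pi\sum n^2|c_n|^2/r^2$, while $\Phi(r)$ is expressible through the boundary values as $\pi\sum n|c_n|^2$ (Green's formula, $\Phi=\frac{1}{2i}\int_{|z|=r}\bar f\,df$). The difficulty is that $\int p^2$ and $\int q^2$ are not separately given by Parseval. We therefore use the parts of $p$ and $q$ coming from the real structure: $p+iq=-\sum\frac{n}{r}c_ne^{i(n+1)\theta}$, and $p,q$ are real. Writing $a_m=-\frac{m-1}{r}c_{m-1}$ for the coefficient of $e^{im\theta}$, we get $p=\Re$-part $=\frac12\sum(a_m+\overline{a_{-m}})e^{im\theta}$ and $q=\frac1{2i}\sum(a_m-\overline{a_{-m}})e^{im\theta}$. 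This gives
\[
\int_0^{2\pi}(t_1p^2+tq^2)\,d\theta=\frac{\pi}{2}\sum_m\Bigl((t_1+t)\bigl(|a_m|^2+|a_{-m}|^2\bigr)+2(t_1-t)\Re(a_ma_{-m})\Bigr).
\]
For each pair $(m,-m)$ we must compare this quadratic form with $\frac{2\alpha}{r}\cdot\pi\bigl(n|c_n|^2+n'|c_{n'}|^2\bigr)$, where $n=m-1$ and $n'=-m-1$. Optimising over the worst $m$ should give the condition that the $2\times2$ form be nonnegative definite, namely $\alpha^2+(t_1+t)\alpha-3t_1t\le0$ (the binding case being $m=\pm2$ or similar). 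The largest root is (\ref{eq-26-1-3-1}).

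\textbf{Optimisation and main obstacle.} We then take the best $t$. Appendix~\ref{apx2} provides concavity of $\alpha$ together with the signs of $\alpha'$ at the endpoints, hence a unique maximiser $t_\star$. Clearing radicals in $\alpha'(t)=0$ produces the quartic $N_k(t)=0$. The main obstacle is the Fourier step: identifying exactly which pairs $(m,-m)$ are extremal, and verifying that the $2\times2$ quadratic-form inequality holds uniformly in $m$ with the constant $3t_1t$. I would first check $m=0,\pm1,\pm2$ by hand and then show monotonicity in $|m|$ for larger modes. The uniformity is local, giving $\mathbf C^{1,\alpha}_{\mathrm{loc}}(\Omega)$ after a standard covering argument.
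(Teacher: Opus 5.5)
Your proposal is correct, and apart from one step it is the paper's proof. The shared steps are: the reduction to the quasiregular gradient map $f=\varphi_z$, the Morrey inequality $J'(r)\ge\frac{2\alpha}{r}J(r)$, the pairing of $d_{m-1}$ with $d_{-m-1}$ (your $a_m,a_{-m}$), the binding mode $m=2$, and Appendix~\ref{apx2} for $t_\star$.

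\textbf{Where you differ: the pointwise bound.} The genuine difference is how you obtain $J_f\ge t_1p^2+tq^2$.
\begin{itemize}
\item The paper parametrises $s,q$ by $p$ and $\varsigma=\xi-2\theta$. It gets $J_f=\frac{1-|\mu|^2}{(|\mu|-\cos\varsigma)^2}p^2$ and imposes a nonpositive discriminant on a quadratic in $\cos\varsigma$, which is only a sufficient condition.
\item Your elimination of $s$ works and is cleaner once you notice that $J_f=q^2-ps$ is \emph{linear} in $s$. The constraint $|\mu|\le k$ is the quadratic inequality $(1-k^2)s^2+2(1+k^2)ps+(1-k^2)p^2-4k^2q^2\le0$, so for $p\neq0$ the minimum sits at an endpoint of this interval:
\[
\min_s J_f=q^2+\frac{(1+k^2)p^2-2k|p|\sqrt{p^2+(1-k^2)q^2}}{1-k^2}.
\]
The weighted AM--GM bound $2k|p|\sqrt{p^2+(1-k^2)q^2}\le\frac{k^2p^2}{1-t}+(1-t)\bigl(p^2+(1-k^2)q^2\bigr)$ then gives exactly $t_1p^2+tq^2$ with the paper's $t_1$.
\end{itemize}

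\textbf{What your route buys: sharpness.} Equality in AM--GM is attained for a suitable ratio $q/p$ precisely when $t\ge 1-k$. So every member of the family is attained, and any admissible pair $(\tau_1,\tau_2)$ is dominated componentwise by some $(t_1(t),t)$. Since the resulting exponent increases in both coefficients, optimising over $t$ is the best any bound of the form $\tau_1p^2+\tau_2q^2$ can give in this scheme. The paper's argument does not make this visible.

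\textbf{The checks you flagged in the Fourier step are routine.}
\begin{itemize}
\item The modes $m=0,1$ add nonnegative terms to $J'(r)$ and only $-\pi|c_{-1}|^2$ and $-2\pi|c_{-2}|^2$ to $J(r)$. This is why the paper drops them.
\item For $m\ge2$ the relevant $2\times2$ matrix has diagonal entries $t_1+t-\frac{2\alpha}{m-1}$ and $t_1+t+\frac{2\alpha}{m+1}$, and off-diagonal entry $t_1-t$. Its determinant is $4t_1t-\frac{4\alpha(t_1+t)+4\alpha^2}{m^2-1}$.
\item This determinant increases with $m$, so $m=2$ binds and gives exactly $\alpha^2+(t_1+t)\alpha-3t_1t\le0$.
\end{itemize}
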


\begin{remark}
If in particular, choosing
$$t_0=(1-k)\left(1+\frac{1}{4}k\right),$$
then we have an explicit H\"older exponent $\alpha_0=\alpha(t_0)$ as follows
$$\alpha_0=\frac{1}{8}\left(\frac{(1-k)(4+k)\sqrt{144+k(4+k)(48+k(4+k))}}{(1+k)(3+k)}+k(3+k)-\frac{21+3k}{(3+k)(1+k)}-1\right).$$
It is not difficult to check that $\alpha(t_{\star})>\alpha_0>\alpha_1$, for any $0<k<1$. See Figure 1.
\end{remark}

\begin{corollary}\label{cor-2021-Oct-27}
Under the assumptions of Theorem $\ref{thm1-2021-Oct-27}$, we have
$\varphi\in\mathbf{C}^{1, \alpha_2}_{\mathrm{loc}}(\Omega)$, where
$$\alpha_2 =\frac{\sqrt{33}-3}{4}\cdot\frac{1-k^2}{1+k^2}>\frac{1-k}{1+k}, \ \ \mbox{if}\ \ k\in(0.2422, 1).$$
\end{corollary}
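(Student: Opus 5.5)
The plan is to specialize the one-parameter family of exponents $\alpha(t)$ from (\ref{eq-26-1-3-1}) to a convenient value of $t$, instead of working with the implicitly defined $t_\star$. The key observation is that the constant $\frac{\sqrt{33}-3}{4}$ is what $\alpha(t)$ gives when $t_1=\tfrac12 t$. Indeed, if $t_1=ct$, then
$$\alpha(t)=\frac{t}{2}\Bigl(\sqrt{(1+c)^2+12c}-(1+c)\Bigr),$$
and for $c=\tfrac12$ the radicand is $\tfrac94+6=\tfrac{33}{4}$. Hence $\alpha(t)=t\cdot\frac{\sqrt{33}-3}{4}$.

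\textbf{Step 1 (choice of $t$).} I will solve $t_1=t/2$ using $t_1=\frac{(1-k^2-t)t}{(1-k^2)(1-t)}$. This reduces to
$$2(1-k^2-t)=(1-k^2)(1-t),$$
that is, $1-k^2=t(1+k^2)$. So $t_2:=\frac{1-k^2}{1+k^2}$. I then check admissibility, namely $1-k<t_2<1-k^2$. The upper bound is immediate since $1+k^2>1$. The lower bound is equivalent to $(1+k)/(1+k^2)>1$, i.e. $k>k^2$, which holds for $0<k<1$. With this choice, $\alpha(t_2)=\alpha_2$.

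\textbf{Step 2 (regularity).} By the construction in Theorem \ref{thm1-2021-Oct-27}, $t_\star$ maximizes $\alpha$ on $(1-k,1-k^2)$, so $\alpha_2=\alpha(t_2)\le\alpha(t_\star)$. Local $\mathbf{C}^{1,\alpha(t_\star)}$ regularity implies local $\mathbf{C}^{1,\alpha_2}$ regularity, because on compact subsets a H\"older condition with a larger exponent implies one with a smaller exponent. Alternatively, the Morrey-type argument behind the theorem can be run directly with $t=t_2$, using $J_f\ge t_1p^2+t_2q^2$.

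\textbf{Step 3 (comparison with $1/K$).} Set $c=\frac{\sqrt{33}-3}{4}\approx0.6861$. Then $\alpha_2>\frac{1-k}{1+k}$ is equivalent to $c(1+k)^2>1+k^2$, i.e.
$$(1-c)k^2-2ck+(1-c)<0.$$
The roots of this quadratic are $k_\pm=\frac{c\pm\sqrt{2c-1}}{1-c}$. Numerically $k_-\approx0.2422$, and $k_+>1$ because the two roots have product $1$. So the inequality holds exactly for $k\in(k_-,1)$.

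The only delicate point is confirming that $t_2$ lies in the admissible range where the Jacobian estimate $J_f\ge t_1p^2+tq^2$ is valid. The rest is elementary algebra, together with the numerical evaluation of the threshold $k_-=\frac{c-\sqrt{2c-1}}{1-c}\approx0.2422$.
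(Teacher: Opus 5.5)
Your proposal is correct. It arrives at the same pair $(t_1,t_2)=\bigl(\tfrac12\cdot\tfrac{1-k^2}{1+k^2},\,\tfrac{1-k^2}{1+k^2}\bigr)$ that the paper uses, but you obtain the Jacobian bound by a different route.

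\emph{The paper's route.} It gets the bound without Lemma~\ref{lem-2}. It starts from the identity of Lemma~\ref{lem-1}, $J_f=\frac{1-|\mu|^2}{1+|\mu|^2}\bigl(q^2+\frac{p^2+s^2}{2}\bigr)$, and discards $s^2\ge 0$. Since $x\mapsto\frac{1-x}{1+x}$ is decreasing and $|\mu|^2\le k^2$, this gives $J_f\ge\frac{1-k^2}{1+k^2}\bigl(q^2+\frac{p^2}{2}\bigr)$. Only the Fourier/Morrey part of the proof of Theorem~\ref{thm1-2021-Oct-27} is then reused.

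\emph{Your route.} You find the same inequality inside the one-parameter family of Lemma~\ref{lem-2}, by solving $t_1(t)=t/2$ and checking $1-k<\frac{1-k^2}{1+k^2}<1-k^2$.

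\emph{Comparison.} The paper's route is shorter and more self-contained. It avoids the $\varsigma$-parametrization of Lemma~\ref{lem-2}, which divides by $|\mu|-\cos\varsigma$, and it avoids the discriminant argument. Your route gives a structural fact the paper never states: $\alpha_2=\alpha(t)$ for an admissible $t$. Hence $\alpha_2\le\alpha(t_\star)$, and the corollary is literally a special case of the theorem.

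Of your two options in Step 2, running $J'(r)\ge\frac{2\alpha(t_2)}{r}J(r)$ directly at $t=t_2$ is the more robust one. It uses nothing about how $t_\star$ is characterized in the Appendix.

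Your Step 3 supplies the verification of the threshold, which the paper only asserts. One small correction: $k_-=\frac{c-\sqrt{2c-1}}{1-c}\approx 0.24212$, which rounds to $0.2421$, not $0.2422$. This is harmless: the paper's cutoff is slightly conservative, and the claim on $(0.2422,1)$ follows because $(0.2422,1)\subset(k_-,1)$.
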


The rest of this paper is organized as follows: in Section \ref{sec-2}, we give the necessary terminologies, introduce some known results, and prove two lemmas which will be used in
proving our main results; in Section \ref{sec-3}, we prove Theorem \ref{thm1-2021-Oct-27} and Corollary \ref{cor-2021-Oct-27}; in Section \ref{apx}, we give some detailed calculations related to the existence of the unique critical point $t_{\star}$.

\begin{figure}
  \centering
  \includegraphics[width=0.8\textwidth]{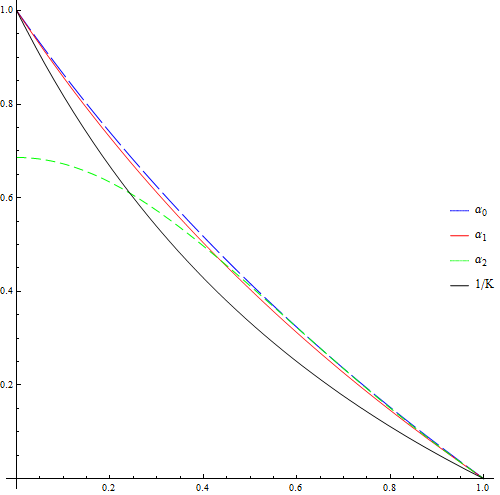}\\
  \caption{Comparison of H\"older exponent}\label{12}
\end{figure}

\section{Preliminaries}\label{sec-2}
In this section, we introduce some necessary terminology and prove two lemmas.
We start with the following Hessian matrix of $\varphi$.

Let $A=(a_{ij})_{2\times2}$ be the coefficient matrix of (\ref{elliptic-1}), and $B=D^2\varphi$ be the Hessian matrix of $\varphi$. Recall that we have assumed
$$f=u+iv=\varphi_z.$$
This implies $u=\varphi_x/2$ and $v=-\varphi_y/2$. Then one can write $B$ as follows:
\begin{equation*}       
B=(\xi_1, \xi_2)
=2\left(                 
  \begin{matrix}   
   u_x, & u_y \\  

    -v_x, & -v_y \\  
  \end{matrix}
\right)
\end{equation*}
where $u_y=-v_x$ and $\xi_j$ is the $j$-th column of $B$.

Elementary calculations show that
$$\mathrm{Tr}(ABB^{T})=\sum_{k=1}^2\langle A\xi_k, \xi_k\rangle$$
and of course, $B^T=B$ since $B$ is symmetric.
Moreover, we have the following equation which is not difficult to check
\begin{equation}\label{tr-det-eq1}\mathrm{Tr}(ABB^T)=\mathrm{Tr}(AB)\mathrm{Tr}(B^T)-\mathrm{Tr}(A)\mathrm{det}(B),\end{equation}
where $\mathrm{det}(B)$ is the determinant of $B$.

\subsection{\texorpdfstring{$f=\varphi_z$}\, \mbox{is a} \texorpdfstring{$K$-} quasiregular mapping}\label{sec-21}
It follows from the definition of $B$ that
\begin{equation*}       
\xi_1=2\left(                 
  \begin{matrix}   
   u_x \\  

    -v_x \\  
  \end{matrix}
\right)
\ \ \ \mbox{and}\ \ \
\xi_2=2\left(                 
\begin{matrix}   
   u_y \\  

    -v_y \\  
  \end{matrix}
\right).                
\end{equation*}
The uniform ellipticity of $L$ implies the following inequalities hold
\begin{equation}\label{eq-11-12-2}\lambda(|\xi_1|^2+|\xi_2|^2)\leq\mathrm{Tr}(ABB^T)=\langle A\xi_1, \xi_1\rangle+\langle A\xi_2, \xi_2\rangle\leq\Lambda(|\xi_1|^2+|\xi_2|^2)\end{equation}
where $\lambda$ and $\Lambda$ are positive and satisfy $\Lambda/\lambda\leq \mbox{const}$.

On the other hand, using (\ref{tr-det-eq1}) and (\ref{another-for-elp}) we have
\begin{align}\label{12-23-1}\nonumber
  \mathrm{Tr}(ABB^T) & =\mathrm{Tr}(B^T)\mathrm{Tr}(AB)-\mathrm{Tr}(A)\mathrm{det}(B) \\
  & =2(u_x-v_y)g-4(a_{11}+a_{22})(-u_xv_y+u_yv_x).
\end{align}
In order to estimate $\mathrm{Tr}(ABB^T)$, by setting $t=\lambda>0$, one can easily obtain
$$(u_x-v_y)g\leq\frac{t(u_x-v_y)^2+\frac{1}{t}g^2}{2}\leq\frac{2t(u_x^2+v_y^2)+\frac{1}{t}\|g\|_{\infty}^2}{2}.$$
This implies that
$$\mathrm{Tr}(B^T)\mathrm{Tr}(AB)\leq 2t(u_x^2+u_y^2+v_x^2+v_y^2)+\frac{1}{t}\|g\|_{\infty}^2.$$
Moreover, since $\lambda$, $\Lambda$ are the eigenvalues of $A$, we see that $\mathrm{Tr}(A)=a_{11}+a_{22}=\lambda+\Lambda>0$.
By using
$$J_f=u_xv_y-u_yv_x,$$
we have
$$\mathrm{Tr}(A)\mathrm{det}(B)=-4(\lambda+\Lambda)J_f.$$
Then
\begin{equation}\label{eq-11-28-3}\mathrm{Tr}(ABB^T)\leq2\lambda(u_x^2+u_y^2+v_x^2+v_y^2)+\frac{1}{\lambda}\|g\|_{\infty}^2+4(\lambda+\Lambda)J_f.\end{equation}

Recall that
$$|\xi_1|^2+|\xi_2|^2=\|B\|_{F}^2=4(u_x^2+u_y^2+v_x^2+v_y^2),$$
where $\|B\|_{F}$ is the Frobenius norm of $B$. We see from (\ref{eq-11-12-2}) and (\ref{eq-11-28-3}) that
$$2\lambda(u_x^2+u_y^2+v_x^2+v_y^2)\leq\frac{1}{\lambda}\|g\|_{\infty}^2+4(\lambda+\Lambda)J_f$$
or what is the same
$$|\nabla f|^2=2(|f_z|^2+|f_{\bar{z}}|^2)\leq\frac{1}{2\lambda^2}\|g\|_{\infty}^2+2\left(1+\frac{\Lambda}{\lambda}\right)J_f.$$
This shows that $f$ is a $(K, K')$-quasiregular mapping (see \cite[Chapter 12]{DG}).

If in particular $g=0$, then we see from (\ref{eq-11-12-2}) and (\ref{12-23-1}) that
\begin{equation}\label{eq-1-1130}\frac{\lambda}{\lambda+\Lambda}|\nabla f|^2\leq J_f\leq\frac{\Lambda}{\lambda+\Lambda}|\nabla f|^2\end{equation}
which shows that $f$ is a $K$-q.r. mapping with $K=1+\Lambda/\lambda$.

We remark here that, if $g=0$, then without using the arithmetic-geometric mean inequality $2ab\leq a^2+b^2$, one can directly deduce from (\ref{eq-11-12-2}) and (\ref{12-23-1}) to get a better estimate of $J_f$, that is (\ref{eq-1-1130}). Moreover, after an orthogonal change of variables, one can improve the quasiregularity constant as $K=\Lambda/\lambda$, see Appendix \ref{apx1}.

\subsection{Representation of \texorpdfstring{$J_f$}\, \mbox{in term of} \texorpdfstring{$p$, $q$,}\, \mbox{and} $s$}
For $z=re^{i\theta}\in\Omega$, following the notations in \cite{Kovalev-1}, we may assume that
\begin{equation}\label{sq} f_{\bar{z}}(z)+e^{2i\theta}f_z(z) =s(z)-iq(z) \end{equation}
and
\begin{equation}\label{pq}f_{\bar{z}}(z)-e^{2i\theta}f_z(z)=p(z)+iq(z),\end{equation}
where $p$, $q$, $s$ are real-valued functions.

We have the following lemma.
\begin{lemma}\label{lem-1}
Let $\varphi\in C^2(\Omega)$ be the strong solution of the lnear uniformly elliptic equation $L\varphi=0$, and $f=\varphi_z$. Then $f$ is $K$-q.r. in $\Omega$ and for a.e. $z\in\Omega$, we have
$$J_f=\frac{1-|\mu|^2}{1+|\mu|^2}\left(q^2+\frac{p^2+s^2}{2}\right),$$
where $\mu$ is the Beltrami coefficient of $f$.
\end{lemma}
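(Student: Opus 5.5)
The plan is a direct pointwise computation at a fixed point $z=re^{i\theta}$ where $f$ is differentiable, using only two facts established earlier: that $f=\varphi_z$ is $K$-quasiregular (this is (\ref{eq-1-1130}) in Section \ref{sec-21}, since $g=0$), and that $f_{\bar z}=\varphi_{z\bar z}$ is real-valued a.e. The first assertion of the lemma is therefore already proved. Only the identity for $J_f$ remains, and I will obtain it by expressing $|f_z|^2$ and $|f_{\bar z}|^2$ through $p,q,s$.

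\textbf{Step 1: solve for $f_{\bar z}$ and $e^{2i\theta}f_z$.} Set $a=f_{\bar z}\in\mathbb{R}$ and $b=e^{2i\theta}f_z\in\mathbb{C}$. Then (\ref{sq}) and (\ref{pq}) read $a+b=s-iq$ and $a-b=p+iq$. Adding the two gives $2a=s+p$; the imaginary parts cancel, which is consistent with $a$ being real. Subtracting gives
$$b=\frac{s-p}{2}-iq.$$
Hence
$$|f_{\bar z}|^2=\frac{(s+p)^2}{4},\qquad |f_z|^2=|b|^2=\frac{(s-p)^2}{4}+q^2,$$
and summing, the cross terms cancel:
$$|f_z|^2+|f_{\bar z}|^2=q^2+\frac{p^2+s^2}{2}.$$
This realness of $f_{\bar z}$ is the only place where the gradient structure enters. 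Without it, $q$ would not be well defined by (\ref{sq}) and (\ref{pq}) simultaneously.

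\textbf{Step 2: relate $J_f$ to $|f_z|^2+|f_{\bar z}|^2$ via $\mu$.} From (\ref{k-qr}) we have $|f_{\bar z}|=|\mu||f_z|$ a.e. Therefore
$$J_f=|f_z|^2-|f_{\bar z}|^2=(1-|\mu|^2)|f_z|^2,\qquad |f_z|^2+|f_{\bar z}|^2=(1+|\mu|^2)|f_z|^2.$$
Dividing the first by the second (note $1+|\mu|^2>0$) gives
$$J_f=\frac{1-|\mu|^2}{1+|\mu|^2}\bigl(|f_z|^2+|f_{\bar z}|^2\bigr).$$
Substituting Step 1 yields the claimed formula.

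\textbf{Main obstacle: the degenerate set $\{f_z=0\}$.} On this set $\mu$ is not intrinsically determined. However, quasiregularity forces $f_{\bar z}=0$ there as well, so both sides of the identity vanish for any admissible choice of $\mu$ with $|\mu|\le k$. Finally, the whole computation holds a.e.: it needs $f\in W^{1,2}_{\mathrm{loc}}$ to have formal derivatives a.e., the Beltrami equation a.e., and $f_{\bar z}$ real a.e. The identity therefore holds for a.e. $z\in\Omega$, as stated.
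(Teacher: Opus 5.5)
Your proof is correct and is essentially the paper's argument. The paper also writes $f_{\bar z}=\tfrac12(s+p)$ and $f_z=\tfrac12 e^{-2i\theta}(s-p-2iq)$ and uses $|f_{\bar z}|^2=|\mu|^2|f_z|^2$, but it expands that relation to solve for $ps$ and substitutes into $J_f=q^2-ps$. You instead route the same algebra through the general identity $J_f=\frac{1-|\mu|^2}{1+|\mu|^2}(|f_z|^2+|f_{\bar z}|^2)$ together with $|f_z|^2+|f_{\bar z}|^2=q^2+\frac{p^2+s^2}{2}$, which is slightly cleaner. Your handling of $\{f_z=0\}$ is fine but not really needed: $J_f=(1-|\mu|^2)|f_z|^2$ and $|f_z|^2+|f_{\bar z}|^2=(1+|\mu|^2)|f_z|^2$ give the identity without ever dividing by $|f_z|^2$.
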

\begin{proof}
It follows from (\ref{sq}) and (\ref{pq}) that
$$f_z=\frac{e^{-2i\theta}}{2}(s-p-2iq)\ \ \ \mbox{and}\ \ \ f_{\bar{z}}=\frac{1}{2}(s+p).$$
Since $g=0$, we see from (\ref{eq-1-1130}) that $f$ is a $K$-q.r. mapping. The Beltrami equation (\ref{k-qr}) gives the following equality
\begin{equation}\label{eq-2-11-30}(s+p)^2=|\mu(z)|^2((s-p)^2+4q^2),\end{equation}
where $\mu$ is the Beltrami coefficient satisfying $|\mu|\leq k<1$. Then
\begin{equation}\label{lem-1-1}ps=\frac{2|\mu|^2}{1+|\mu|^2}q^2-\frac{1-|\mu|^2}{2(1+|\mu|^2)}(s^2+p^2).\end{equation}

Notice that
\begin{equation}\label{lem-1-2}J_f=|f_z|^2-|f_{\bar{z}}|^2=-\mathrm{Re}\big((s+iq)(p+iq)\big)=q^2-ps.\end{equation}
Combining (\ref{lem-1-1}) and (\ref{lem-1-2}), we have
\begin{equation}\label{eq-3-10-30}J_f=\frac{1-|\mu|^2}{1+|\mu|^2}\left(q^2+\frac{p^2+s^2}{2}\right),\end{equation}
which completes the proof.
\end{proof}

Consider the function
$$J(r):=\int_{\mathbb{D}(0, r)}J_f(z)\mathrm{d}x\mathrm{d}y.$$
We are going to obtain the Fourier series of $J(r)$.

First, it follows from (\ref{10-29-1}) that
$$s=\frac{1}{2}\sum_{n=-\infty}^{\infty}\big(c_n'e^{i(n+1)\theta}+\bar{c}_n'e^{-i(n+1)\theta}\big)$$
and
$$q=-\frac{1}{2i}\sum_{n=-\infty}^{\infty}\big(c_n'e^{i(n+1)\theta}-\bar{c}_n'e^{-i(n+1)\theta}\big).$$
Similarly, from (\ref{10-29-2}), we obtain
$$p=-\frac{1}{2}\sum_{n=-\infty}^{\infty}\left(\frac{nc_n}{r}e^{i(n+1)\theta}+\frac{n\bar{c}_n}{r}e^{-i(n+1)\theta}\right)$$
and
$$q=-\frac{1}{2i}\sum_{n=-\infty}^{\infty}\left(\frac{nc_n}{r}e^{i(n+1)\theta}-\frac{n\bar{c}_n}{r}e^{-i(n+1)\theta}\right).$$

Second, by using Parseval's theorem, we have
$$\int_0^{2\pi}ps\mathrm{d}\theta=-\pi\sum_{n=-\infty}^{\infty}\mbox{Re}\left(\frac{n}{r}c_nc_{-n-2}'+\frac{n}{r}c_n\bar{c}_{n}'\right),$$
and
$$\int_0^{2\pi}q^2\mathrm{d}\theta=-\pi\sum_{n=-\infty}^{\infty}\mbox{Re}\left(\frac{n}{r}c_nc_{-n-2}'-\frac{n}{r}c_n\bar{c}_n'\right).$$
Therefore, since $\frac{\mathrm{d}}{\mathrm{d}\rho} |c_n|^2=c_n'\bar{c_n}+\bar{c}_n'c_n$, one has (\cite[(2.8)]{Kovalev-1})
\begin{align}\label{24-614-1}
  J(r) & =\int_{\mathbb{D}(0, r)}J_f(z)\mathrm{d}x\mathrm{d}y=\int_0^r\rho\mathrm{d}\rho\int_0^{2\pi}(q^2-ps) \mathrm{d}\theta \\\nonumber
   & =\int_0^r\pi\sum_{n=-\infty}^{\infty}n\bigg(c_n\bar{c_n}'+\bar{c_n}c_n'\bigg)\mathrm{d}\rho\\\nonumber
   &=\pi\sum_{n=-\infty}^{\infty}n|c_n(r)|^2.
\end{align}

\subsection{Fourier series of the integration of functions $p^2$, $q^2$, and $s^2$}

Applying Parseval's theorem again, one can easily obtain
$$\int_0^{2\pi}s^2\mathrm{d}\theta=\pi\sum_{n=-\infty}^{\infty}|c_n'|^2+\pi\sum_{n=-\infty}^{\infty}\mbox{Re}(c_n'c_{-n-2}')=\frac{\pi}{2}\sum_{n=-\infty}^{\infty}|c_n'+\bar{c}_{-n-2}'|^2,$$
and
$$\int_0^{2\pi}q^2\mathrm{d}\theta=\pi\sum_{n=-\infty}^{\infty}|c_n'|^2-\pi\sum_{n=-\infty}^{\infty}\mbox{Re}(c_n'c_{-n-2}')=\frac{\pi}{2}\sum_{n=-\infty}^{\infty}|c_n'-\bar{c}_{-n-2}'|^2.$$
On the other hand, by using $d_n=nc_n$, we have (\cite[(2.9) and (2.10)]{Kovalev-1})
\begin{align}\label{p2-123}
  \int_0^{2\pi}p^2\mathrm{d}\theta& =\pi\sum_{n=-\infty}^{\infty}\left(\frac{n}{r}\right)^2|c_n|^2+\pi\sum_{n=-\infty}^{\infty}\frac{n(n+2)}{r^2}\mbox{Re}(c_nc_{-n-2}) \\\nonumber
   & =\frac{\pi}{2r^2}\sum_{n=-\infty}^{\infty}|d_{n-1}+\bar{d}_{-n-1}|^2
\end{align}
and
\begin{align}\label{q2-123}
  \int_0^{2\pi}q^2\mathrm{d}\theta & =\pi\sum_{n=-\infty}^{\infty}\left(\frac{n}{r}\right)^2|c_n|^2-\pi\sum_{n=-\infty}^{\infty}\frac{n(n+2)}{r^2}\mbox{Re}(c_nc_{-n-2}) \\\nonumber
  & =\frac{\pi}{2r^2}\sum_{n=-\infty}^{\infty}|d_{n-1}-\bar{d}_{-n-1}|^2.
\end{align}
Now, summing all the integrals from above, one has
\begin{equation}\label{main-ineq}\int_0^{2\pi}\left(q^2+\frac{p^2+s^2}{2}\right)\mathrm{d}\theta=\pi\sum_{n=-\infty}^{\infty}\left(\frac{n}{r}\right)^2|c_n|^2+\pi\sum_{n=-\infty}^{\infty}|c_n'|^2.\end{equation}

Moreover, by letting $c_n(r)=A_n(r)+iB_n(r)$ and $c_n'(r)=A_n'(r)+iB_n'(r)$, where $A_n$, $A_n'$, $B_n$, $B_n'$ are real-valued, we conclude from (\ref{10-29-1}) and (\ref{10-29-2}) that
\begin{align}\label{11-10-1}
  q & =-\sum_{n=-\infty}^{\infty}\bigg(A_n'(r)\sin(n+1)\theta+B_n'(r)\cos(n+1)\theta\bigg) \\\nonumber
   & =-\sum_{n=-\infty}^{\infty}\frac{n}{r}\bigg(A_n(r)\sin(n+1)\theta+B_n(r)\cos(n+1)\theta\bigg).
\end{align}
Then
$$\frac{1}{2\pi}\int_0^{2\pi}q(re^{i\theta})\mathrm{d}\theta=-B_{-1}'(r)=\frac{B_{-1}(r)}{r}.$$
Next, for $k=1, 2, \cdots$, we see from (\ref{11-10-1}) that
\begin{align*}
  \int_0^{2\pi}q(re^{i\theta}) e^{ik\theta}\mathrm{d}\theta &= \pi i(c_{-k-1}'-\bar{c}_{k-1}') \\
   & =\pi i\left(\frac{-k-1}{r}c_{-k-1}-\frac{k-1}{r}\bar{c}_{k-1}\right).
\end{align*}
Or equivalently,
\begin{equation}\label{11-10-2}c_{-n-2}'-\bar{c}_{n}'=\frac{-n-2}{r}c_{-n-2}-\frac{n}{r}\bar{c}_{n},\ \ \ \mbox{for}\ \ \  n=0, 1, 2, \cdots.\end{equation}

Let $$J'(r)=\frac{\mathrm{d}}{\mathrm{d}r}J(r)=r\int_0^{2\pi}J_f(re^{i\theta})\mathrm{d}\theta.$$
Since $f$ is a $K$-q.r. mapping, Morrey's lemma says that if one can find a constant $0<\alpha\leq1$, such that
$$J'(r)\geq\frac{2\alpha}{r}J(r)$$
then $f$ is H\"older continuous with the H\"older exponent equals to $\alpha$. See \cite[Page 301]{Kovalev-1} and \cite{Astala-JMPA, Morrey-TAMS} for more details.
Now, the Fourier series of $J(r)$ is already given by (\ref{24-614-1}), and one can also find the Fourier series of $J'(r)$ by using (\ref{eq-3-10-30}) and (\ref{main-ineq}).
However, since the function $s^2$ contains the coefficients $c_n'$, it is not easy to compare $J'(r)$ with $J(r)$ directly applying (\ref{11-10-2}).
One possible way is to estimate the integration of $s^2$ in terms of the functions $p^2$ and $q^2$.
\subsection{Representing the functions $s$ and $q$ in terms of $p$}
Let $\mu=a+ib=|\mu|e^{i\xi}.$
It follows from (\ref{10-29-1}) and (\ref{10-29-2}) that
$$f_{\bar{z}}=\frac{s+p}{2},\ \ \ f_z=e^{-2i\theta}\left(\frac{s-p}{2}-iq\right).$$
By using $f_{\bar{z}}=\mu f_z$, we have
$$\frac{s+p}{2}=|\mu|\left(\frac{s-p}{2}-iq\right)e^{i(\xi-2\theta)}.$$
Solving this equation and setting $\varsigma=\xi-2\theta$, one has
\begin{equation}\label{spq}\left\{
\begin{aligned}
	s&=&\frac{|\mu|+\cos\varsigma}{|\mu|-\cos\varsigma}p;\\
	q&=&\frac{\sin\varsigma}{\cos\varsigma-|\mu|}p.
\end{aligned}
\right.\end{equation}

We have the following lemma.

\begin{lemma}\label{lem-2}
Let $z=re^{i\theta}\in\mathbb{D}$, and let $p(z)$, $q(z)$, $s(z)$ be real-valued functions which are given in $(\ref{sq})$ and $(\ref{pq})$. Then
$$J_f\geq\frac{(1-k^2-t)t}{(1-k^2)(1-t)}p^2+tq^2,\ \ \ 1-k<t<1-k^2.$$
\end{lemma}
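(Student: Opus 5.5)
The inequality is pointwise, so I will prove it at a fixed point $z$ where the Beltrami equation holds. The plan is to write $J_f$ as a convex-type combination of its two available expressions and then complete the square in $s$. Write $m=|\mu(z)|\le k$ and $c=\frac{1-m^2}{1+m^2}\in(0,1]$. From the proof of Lemma \ref{lem-1} we have two formulas for $J_f$: $J_f=q^2-ps$ by (\ref{lem-1-2}), and $J_f=c\left(q^2+\frac{p^2+s^2}{2}\right)$ by (\ref{eq-3-10-30}). Hence, for every real parameter $\lambda<1$ (to be chosen depending on $m$ and $t$),
$$J_f=\lambda(q^2-ps)+(1-\lambda)c\left(q^2+\frac{p^2+s^2}{2}\right).$$

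Since $(1-\lambda)c>0$, the right-hand side is a convex quadratic in $s$. Its terms involving $s$ are $\frac{(1-\lambda)c}{2}s^2-\lambda ps$. Minimizing over $s$ gives
$$J_f\ \ge\ \big(\lambda+(1-\lambda)c\big)q^2+\frac{(1-\lambda)^2c^2-\lambda^2}{2(1-\lambda)c}\,p^2 .$$
This discards the constraint linking $s$ to $p,q$, which only weakens the bound. Now choose $\lambda$ so that the $q^2$-coefficient equals $t$, namely $\lambda=\frac{t-c}{1-c}$. This requires $t<1$ to guarantee $\lambda<1$; when $c=1$, i.e.\ $m=0$, one can simply take $\lambda=0$ directly. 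Factor the $p^2$-coefficient as
$$\frac{\big((1-\lambda)c+\lambda\big)\big((1-\lambda)c-\lambda\big)}{2(1-\lambda)c}=\frac{t\,(t-2\lambda)}{2(1-\lambda)c}.$$
Substituting $\lambda$ should reduce this to $\frac{t\,(2c-t(1+c))}{2c(1-t)}$. Using $\frac{2c}{1+c}=1-m^2$, this becomes
$$\frac{(1-m^2-t)\,t}{(1-m^2)(1-t)}.$$

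Finally, this coefficient equals $\frac{t}{1-t}\left(1-\frac{t}{1-m^2}\right)$, which is decreasing in $m$. Since $m\le k$, it is at least $\frac{(1-k^2-t)t}{(1-k^2)(1-t)}$, which is the claimed inequality for every $t<1$. The hypothesis $1-k<t<1-k^2$ is not needed for the inequality itself; the upper bound only makes $t_1>0$, so the estimate is meaningful for the later Morrey argument. The main step to get right is the algebra showing that the choice of $\lambda$ produces exactly $t_1$. Beyond that there is no real obstacle, because the points where $p=0$ or $f_z=0$ are covered automatically by the quadratic-form argument, with no division by $p$ as in (\ref{spq}).
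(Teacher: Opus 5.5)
Your main computation is correct for $0<m<1$, but the case $m=|\mu(z)|=0$ is handled wrongly. There $c=1$, and with $\lambda=0$ your own minimization over $s$ gives only $J_f\ge q^2+\frac{1}{2}p^2$. That implies the claim only if
\[
t_1:=\frac{(1-k^2-t)t}{(1-k^2)(1-t)}\le\frac12 .
\]
This fails in the admissible range. As $t\to1-k$ one has $t_1\to\frac{1-k}{1+k}$, and for instance $k=0.1$, $t=0.95$ gives $t_1\approx0.77$. Note that your prescribed $\lambda=\frac{t-c}{1-c}$ tends to $-\infty$, not to $0$, as $c\to1^-$.

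This case is not a null set you can ignore. $\mu(z)=0$ wherever $\Delta\varphi=4f_{\bar z}$ vanishes and $f_z\neq0$, for example on any open set where $A$ is a multiple of the identity.

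The repair is one line. At such points $f_{\bar z}=\frac{s+p}{2}=0$, so $s=-p$ and $J_f=p^2+q^2\ge t_1p^2+tq^2$, since $t_1<t<1$.

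A cleaner repair avoids the case split altogether. Since $m\le k$ and $q^2+\frac{p^2+s^2}{2}\ge0$, you have $J_f\ge c_k\bigl(q^2+\frac{p^2+s^2}{2}\bigr)$ with $c_k=\frac{1-k^2}{1+k^2}<1$. Write $J_f=\lambda(q^2-ps)+(1-\lambda)J_f$ with $\lambda<1$ and run your argument with $c_k$ in place of $c$. This yields exactly $t_1$ at every point, and also removes the final monotonicity-in-$m$ step.

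With that repair, your route is genuinely different from the paper's.

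The paper solves the Beltrami equation for $s$ and $q$ as multiples of $p$, using the angle $\varsigma=\xi-2\theta$. It obtains $J_f=\frac{1-|\mu|^2}{(|\mu|-\cos\varsigma)^2}p^2$ and reduces the bound to nonnegativity of a quadratic in $\cos\varsigma$. That is enforced by a nonpositive discriminant, which needs $0<t_1<t_2$, followed by monotonicity of the discriminant in $|\mu|$.

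Yours is a multiplier (S-lemma type) certificate. The two expressions for $J_f$ differ by $\frac{1}{2(1+m^2)}\bigl[(s+p)^2-m^2\bigl((s-p)^2+4q^2\bigr)\bigr]$, which vanishes on the constraint set. So every combination is an identity, and you then free $s$. Both routes arrive at the same $t_1$.

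Your version buys three things:
\begin{itemize}
\item it avoids the divisions by $p$ and by $|\mu|-\cos\varsigma$ hidden in (\ref{spq}), which silently exclude degenerate points such as $p=0$ with $(s,q)\neq0$, or $\mu=0$ where $\varsigma$ is undefined;
\item it explains where the formula for $t_1$ comes from;
\item it shows the estimate holds for every $t<1$, not only on $(1-k,1-k^2)$.
\end{itemize}
The paper's parametrization instead exhibits $J_f/p^2$ explicitly on the constraint set. That makes the whole admissible region $t_1\le\frac{(1-t_2-k^2)t_2}{(1-t_2)(1-k^2)}$ visible at once.
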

\begin{proof}
Recall that $J_f=q^2-ps$. It follows from (\ref{spq}) that
$$J_f=\frac{1-|\mu|^2}{(|\mu|-\cos\varsigma)^2}p^2.$$
In order to estimate $J_f$ in the form of
\begin{equation}\label{jf-eqtimate}J_f\geq t_1p^2+t_2q^2,\end{equation}
it is equivalent to show the following:
\begin{align*}
  1-|\mu|^2 & \geq t_1(|\mu|-\cos\varsigma)^2+t_2\sin^2\varsigma\\
   & =t_1|\mu|^2-2t_1|\mu|\cos\varsigma+(t_1-t_2)\cos^2\varsigma+t_2,
\end{align*}
or what is the same
$$(t_2-t_1)\cos^2\varsigma+2t_1|\mu|\cos\varsigma+1-t_2-(1+t_1)|\mu|^2\geq0.$$
Choosing $0<t_1<t_2<1$. The above inequality holds if its discriminant is non-positive, i.e.
$$\Delta=(2t_1|\mu|)^2-4(t_2-t_1)(1-t_2-(1+t_1)|\mu|^2)\leq0.$$
Simplify the above inequality, we have
$$\Delta=(t_2-t_1)(t_2-1+|\mu|^2)+t_1t_2|\mu|^2\leq(t_2-t_1)(t_2-1+k^2)+t_1t_2k^2\leq0,$$
since $|\mu|\leq k<1$.

Now, we may collect $t_1$ and rewrite the above inequality as follows
$$(1-k^2)(1-t_2)t_1-(1-k^2-t_2)t_2\leq0,\ \ \ 0<t_1<t_2\leq1.$$
Or what is the same
\begin{equation}\label{parameter-13}t_1\leq\frac{(1-t_2-k^2)t_2}{(1-t_2)(1-k^2)}.\end{equation}
This shows that in order to prove (\ref{jf-eqtimate}), we may need to choose $0<t_1<t_2<1$, such that (\ref{parameter-13}) holds.

It is easy to check that
$$\frac{(1-t_2-k^2)t_2}{(1-t_2)(1-k^2)}<t_2.$$
Therefore, if we choose
$$t_1=\frac{(1-t-k^2)t}{(1-t)(1-k^2)},\ \ \ t_2=t,$$
then
$$J_f\geq t_1p^2+t_2q^2$$
holds.

Moreover, since $t_1>0$, we have $t<1-k^2$. On the other hand, recall that in \cite{Kovalev-1}, the authors choose $t=1-k$, and thus, $t_1(1-k)=(1-k)/(1+k)$.
In the rest of this paper, in order to get a better exponent, we may assume that $t>1-k$.
\end{proof}

\begin{remark}
Notice that
$$\frac{\mathrm{d}t_1}{\mathrm{d} t}=\frac{(1-t)^2-k^2}{(1-k^2)(1-t)^2}.$$
The critical point is $t=1-k$, and in this case, one has $t_1=(1-k)/(1+k)$.

Moreover, since
$$\frac{\mathrm{d}^2 t_1}{\mathrm{d} t^2}\bigg|_{t=1-k}=\frac{-2k^2}{(1-k^2)(1-t_2)^3}<0.$$
We see that the following estimate
$$J_f\geq \frac{1-k}{1+k} p^2+(1-k)q^2$$
is in fact, the optimal estimate for $J_f$.

However, what we want is not the estimate of $J_f$, but the largest H\"older exponent
$$\alpha=\frac{1}{2}\left(\sqrt{(t_1+t_2)^2+12t_1t_2}-(t_1+t_2)\right).$$

\end{remark}

\section{Proofs of the main results}\label{sec-3}

\subsection{Proof of Theorem \ref{thm1-2021-Oct-27}}\label{sec-31}
The proof mainly follows from \cite{Kovalev-1}. However, for the reader's convenience and for the completeness of the proof, we add some details. Let $z_0\in\Omega$ and $0<R<\mbox{dist}(z_0, \partial\Omega)$.
Our goal is to find the optimal constant $0<\alpha\leq1$ such that
$$\int_{\mathbb{D}(z_0, r)}|Df(z)|^2\mathrm{d}x\mathrm{d}y\leq K\left(\frac{r}{R}\right)^{2\alpha}\int_{\mathbb{D}(z_0, R)}|Df(z)|^2\mathrm{d}x\mathrm{d}y,$$
where $K>1$ is a fixed constant and $0\leq r\leq R$.

Without loss of generality, one can assume that $z_0=0$ and $R=1$. Since $f$ is $K$-q.r. in $\Omega$, we have $1/K|Df|^2\leq J_f\leq|Df|^2$. Thus, one can reduce the above inequality as
\begin{equation}\label{thm-1-1}\int_{\mathbb{D}(0, r)}J_f(z) \mathrm{d}m(z)\leq r^{2\alpha}\int_{\mathbb{D}(0, 1)}J_f(z)\mathrm{d}x\mathrm{d}y,\end{equation}
where $0\leq r\leq1$.
Once this is done, Morrey's lemma will imply $f\in\mathbf{C}^{\alpha}(\Omega)$.

Let
$$J(r)=\int_{\mathbb{D}(0, r)}J_f(z) \mathrm{d}x\mathrm{d}y.$$
It is easy to see that $J(r)$ is an increasing absolutely continuous function on $[0, 1]$. Following \cite[(2.16)]{Kovalev-1},
to prove (\ref{thm-1-1}), it is sufficient for us to show the inequality
$$J'(r)=r\int_0^{2\pi}J_f(re^{i\theta})\mathrm{d}\theta\geq \frac{2\alpha}{r}J(r).$$

It follows from Lemma \ref{lem-2} that
$$J_f\geq\frac{(1-k^2-t)t}{(1-k^2)(1-t)}p^2+tq^2,\ \ \ 1-k<t<1-k^2.$$
For simplicity, let
$$t_1=\frac{(1-k^2-t)t}{(1-k^2)(1-t)}\ \ \ \mbox{and}\ \ \ t_2=t.$$
Then $t_2\geq t_1$. By using (\ref{p2-123}) and (\ref{q2-123}), one has
\begin{align*}
  J'(r) & \geq rt_1\int_0^{2\pi}p^2\mathrm{d}\theta+rt_2\int_0^{2\pi}q^2\mathrm{d}\theta \\
   & \geq\frac{\pi}{r}\sum_{n=2}^{\infty}\left(t_1|d_{n-1}+\bar{d}_{-n-1}|^2+t_2|d_{n-1}-\bar{d}_{-n-1}|^2\right).
\end{align*}
On the other hand, (\ref{24-614-1}) implies
\begin{equation}\label{Jr}J(r)=\pi\sum_{n=-\infty}^{\infty}n|c_n|^2=\pi\sum_{n\neq0}\frac{|d_n|^2}{n}\leq\pi\sum_{n=2}^{\infty}\left(\frac{|d_{n-1}|^2}{n-1}-\frac{|d_{-n-1}|^2}{n+1}\right).\end{equation}

Now, in order to compare $J'(r)$ and $J(r)$, by using (\ref{Jr}), we must find the largest possible constant $C$ such that the inequality
\begin{equation}\label{thm-eq3-123}
t_1|d_{n-1}+\bar{d}_{-n-1}|^2+t_2|d_{n-1}-\bar{d}_{-n-1}|^2\geq C\left(\frac{|d_{n-1}|^2}{n-1}-\frac{|d_{-n-1}|^2}{n+1}\right),\end{equation}
holds for all $n\geq2$. Once this is done, then the corresponding H\"older exponent of this case is $\alpha_2=C/2$.

Fix $t>0$ and let $\zeta=\zeta_1+i\zeta_2=\bar{d}_{-n-1}/d_{n-1}\in\mathbb{C}$. Following the proof of \cite{Kovalev-1}, and notice that the items containing $\zeta_2=\mbox{Im}\zeta$ are all positive on the left-hand side of (\ref{thm-eq3-123}), while negative on the other side. One can reduce (\ref{thm-eq3-123}) in the form by setting $d_{n-1}=1$ and $\bar{d}_{-n-1}=\zeta=\zeta_1\in\mathbb{R}$. That is
$$t_1(1+\zeta)^2+t_2(1-\zeta)^2\geq C\left(\frac{1}{n-1}-\frac{\zeta^2}{n+1}\right),$$
or what is the same
\begin{equation}\label{thm-eq-4}\left(t_1+t_2+\frac{C}{n+1}\right)\zeta^2+2(t_1-t_2)\zeta +\left(t_1+t_2-\frac{C}{n-1}\right)\geq 0.
\end{equation}

For
$$1-k<t< 1-k^2$$
we have
\begin{equation}\label{plus}t_1+t_2=\frac{t(2-2t-k^2(2-t))}{(1-k^2)(1-t)}>0\end{equation}
and
\begin{equation}\label{multip}t_1t_2=\frac{(1-k^2-t)t^2}{(1-k^2)(1-t)}>0.\end{equation}
Therefore, the first coefficient of (\ref{thm-eq-4}) satisfies
$$0<t_1+t_2+\frac{C}{n+1}\leq t_1+t_2+\frac{C}{3}.$$
To find the best $C$, we only need to consider the case $n=2$ in more detail. In fact, the inequality (\ref{thm-eq-4})  holds if its discriminant is non-positive, i.e.
$$4(t_1-t_2)^2-4\left(t_1+t_2+\frac{C}{3}\right)\left(t_1+t_2-C\right)\leq0.$$
This is equivalent to
$$\left(C+t_1+t_2\right)^2\leq(t_1+t_2)^2+12t_1t_2.$$
Then
$$C\leq\sqrt{(t_1+t_2)^2+12t_1t_2}-(t_1+t_2).$$
Therefore, we can choose
\begin{equation}\label{3-25}\alpha(t)=\frac{1}{2}\left(\sqrt{(t_1+t_2)^2+12t_1t_2}-(t_1+t_2)\right)\end{equation}
and (\ref{eq-26-1-3-1}) holds.

It should be noted that for $1-k<t<1-k^2$, we have (see Appendix \ref{apx2})
$$\alpha'(1-k)>0,\ \ \ \alpha'(1-k^2)<0,\ \ \  \mbox{and}\ \ \  \alpha''(t)<0.$$
This shows that $\alpha(t)$ has a unique critical point $t_\star\in(1-k, 1-k^2)$.

Let
$$S=t_1+t_2\ \ \ \mbox{and}\ \ \ P=t_1t_2$$
be given by (\ref{plus}) and (\ref{multip}).
Now, we find the critical point $t_\star$ as follows.
It follows from (\ref{3-25}) that
$$2\alpha'=\frac{SS'+6P'}{\sqrt{S^2+12P}}-S'=0.$$
Squaring the above equation, we obtain
\begin{equation}\label{3-25-1}SS'P'+3P'-PS'^2=0.\end{equation}

Elementary calculations and using (\ref{plus}), (\ref{multip}), show that
\begin{equation}\label{3-25-2}S'=\frac{2(1-t)^2-k^2(1+(1-t)^2)}{(1-k^2)(1-t)^2}\ \ \ \mbox{and}\ \ \ P'=\frac{2(1-t)^2-k^2(2-t)}{(1-k^2)(1-t)^2}t.\end{equation}
Substituting (\ref{3-25-2}) into (\ref{3-25-1}) and taking the numerator part, we see that the critical point $t=t_\star$ is one of the solutions to the following equation
\begin{align*}\label{3-25-e}\nonumber
  N_k(t)=&(16-16k^2+k^4)t^4-(64-80k^2+18k^4)t^3 \\
  & +(96-160k^2+69k^4-5k^6)t^2-(64-144k^2+96k^4-16k^6)t\\\nonumber
  &+16-48k^2+48k^4-16k^6=0.
\end{align*}

This completes the proof. \qed

\subsection{Proof of Corollary \ref{cor-2021-Oct-27}} It follows from Lemma \ref{lem-1} that
\begin{align*}
  J_f & =\frac{1-|\mu|^2}{1+|\mu|^2}\left(q^2+\frac{p^2+s^2}{2}\right) \\
   & \geq\frac{1-k^2}{1+k^2}\left(q^2+\frac{p^2}{2}\right).
\end{align*}
According to the proof of Theorem \ref{thm1-2021-Oct-27}, for
$$t_1=\frac{1}{2}\cdot\frac{1-k^2}{1+k^2}, \ \ \ t_2=\frac{1-k^2}{1+k^2},$$
we have
$$\alpha=\frac{1}{2}\left(\sqrt{(t_1+t_2)^2+12t_1t_2}-(t_1+t_2)\right)=\frac{\sqrt{33}-3}{4}\cdot\frac{1-k^2}{1+k^2}.$$

This completes the proof. \qed

\section{Appendix}\label{apx}
\subsection{The standard quasiregularity constant $K$}\label{apx1}
In the homogeneous case $g=0$, after an orthogonal change of variables on may assume
\begin{equation*}       
A=\left(                 
  \begin{matrix}   
   \Lambda, & 0 \\  

    0, & \lambda \\  
  \end{matrix}
\right),
\ \ \ m:=\frac{\Lambda}{\lambda}>1.
\end{equation*}
Writing
\begin{equation*}       
D^2\varphi=\left(                 
  \begin{matrix}   
   a, & b \\  

    c, & d \\  
  \end{matrix}
\right),
\end{equation*}
the equation $L\varphi=0$ gives
$$\Lambda a+\lambda c=0,\ \ \ c=-ma.$$
For
$$f=\varphi_z=\frac{\varphi_x-i\varphi_y}{2},$$
one has
$$f_z=\frac{(1+m)a-2ib}{4}\ \ \ \mbox{and}\ \ \ f_{\bar{z}}=\frac{(1-m)a}{4}.$$
Hence
$$|\mu|=\left|\frac{f_{\bar{z}}}{f_z}\right|\leq\frac{\Lambda-\lambda}{\Lambda+\lambda}.$$
Therefore, the Beltrami parameter should be taken as
$$k=\frac{\Lambda-\lambda}{\Lambda+\lambda}$$
and the standard quasiregularity constant is
$$K=\frac{1+k}{1-k}=\frac{\Lambda}{\lambda}.$$
It should be noted that in \cite[(1.2)]{Kovalev-1}, Baernstein and Kovalev normalized with $\Lambda=\sqrt{K}$ and $\lambda=1/\sqrt{K}$.

\subsection{$\alpha(t)$ has a unique critical point in the interval $(1-k, 1-k^2)$}\label{apx2}
Recall that
$$2\alpha'=\frac{SS'+6P'}{\sqrt{S^2+12P}}-S'.$$
Since $t_1(1-k)=(1-k)/(1+k)$, $t_1(1-k^2)=0$, $t_1'(1-k)=0$, and $t_1'(1-k^2)=-1/k^2$, we have
$$\alpha'(1-k)=\frac{8-k^2-7k-2\frac{1-k}{1+k}\sqrt{16+16k+k^2}}{2(1-k)\sqrt{16+16k+k^2}}>0$$
and
$$\alpha'(1-k^2)=-\frac{6}{k^2}<0.$$

On the other hand, elementary calculations show that
\begin{align*}
  2\alpha'' & =\frac{S'^2+6P''+SS''}{(S^2+12P)^{1/2}}-\frac{(SS'+6P')^2}{(S^2+12P)^{3/2}}-S'' \\
   & \leq \frac{S'^2+6P''}{(S^2+12P)^{1/2}}-\frac{(SS'+6P')^2}{(S^2+12P)^{3/2}}.
\end{align*}
To show $\alpha''<0$, it is sufficient to show that
$$(S'^2+6P'')(S^2+12P)-(SS'+6P')^2\leq0$$
i.e.
$$2PS'^2+P''S^2+12PP''-2P'S'S-6P'^2\leq0.$$
Now, applying (\ref{plus}), (\ref{multip}), and (\ref{3-25-2}), the above inequality is equivalent to
$$-\frac{2k^2t^3[(16-20k^2+5k^4)t^2-(32-52k^2+20k^4)t+16(1-k^2)^2]}{(1-k^2)^3(1-t)^5}\leq0.$$

We may only consider the numerator part. For this, let
$$a=16-20k^2+5k^4,\ \ \ b=32-52k^2+20k^4,\ \ \ c=16(1-k^2)^2.$$
Consider the function
$$\varphi(t)=at^2-bt+c.$$
Once we prove that $\varphi(t)>0$ for $1-k<t<1-k^2$, then $\alpha''(t)<0$, and this is what we need.

But this is easy to check, because the coefficients $a$, $b$, and $c$ are positive, since
$$a=5(1-k^2)^2+10(1-k^2)+1>0\ \ \ \mbox{and}\ \  b=20(1-k^2)^2+12(1-k^2)>0.$$
The discriminant
$$\Delta=b^2-4ac=80k^4(1-k^2)^2>0.$$
There are two real roots of $\varphi(t)$, which are given as follows
$$x_{1, 2}=\frac{b\mp\sqrt{\Delta}}{2a}=\frac{20(1-k^2)^2+12(1-k^2)\mp\sqrt{80}k^2(1-k^2)}{10(1-k^2)^2+20(1-k^2)+2}.$$
Notice that
$$20(1-k^2)+12-\sqrt{80}k^2>10(1-k^2)^2+20(1-k^2)+2$$
i.e.
$$20-\sqrt{80}-10k^2\geq0$$
holds. We have the smallest root
$$x_1>1-k^2$$
and thus, $\varphi(t)>0$ for $1-k<t<1-k^2$.

\vspace*{5mm}
\noindent {\bf Acknowledgments}.
The authors would like to thank Professor Leonid Kovalev and Professor David Kalaj for their helpful comments and suggestions on this paper, and thank Professor Giuseppe Di Fazio for his kindly help on the calculations of the critical point of $\alpha(t)$.

\vspace*{5mm}
\noindent{\bf Funding.}
The research of the authors were supported by NSF of China (No. 12271189), NSF of Guangdong Province (Grant No. 2024A1515010467, 2026A1515012333), STU Scientific Research Initiation Grant NTF25017T, and Fujian Alliance of Mathematics (Grant No. 2023SXLMMS07).

\vspace*{5mm}
\noindent {\bf Conflict of Interests}.
The authors declare that there is no conflict of interests regarding the publication of this paper.

\vspace*{5mm}
\noindent {\bf Data Availability Statement}.
The authors confirm that the data supporting the findings of this study are available within the article and its supplementary materials.

\end{document}